\documentclass[paper=a4,english,fontsize=11pt,parskip=half,abstract=true]{scrartcl}
\usepackage{babel}
\usepackage[utf8]{inputenc}
\usepackage[T1]{fontenc}
\usepackage[left=20mm,right=20mm,top=30mm,bottom=30mm]{geometry}
\usepackage{amsmath}
\usepackage{amsthm}
\usepackage{amssymb}
\usepackage{enumerate} 
\usepackage{thmtools}
\usepackage{mathtools}
\mathtoolsset{centercolon} 
\usepackage{stmaryrd} 
\usepackage[bookmarks=true,
            pdftitle={A solution to Brauer's Problem 14},
            pdfauthor={Benjamin Sambale},
            pdfkeywords={},
            pdfstartview={FitH}]{hyperref}

\newtheorem{Thm}{Theorem} 
\newtheorem*{ThmA}{Theorem~A} 
\newtheorem{Lem}[Thm]{Lemma}

\theoremstyle{definition}

\numberwithin{equation}{section}

\setcounter{MaxMatrixCols}{25}
\allowdisplaybreaks[1]

\renewcommand{\phi}{\varphi}
\newcommand{\C}{\mathrm{C}}

\newcommand{\Z}{\mathrm{Z}}

\newcommand{\ZZ}{\mathbb{Z}}
\newcommand{\CC}{\mathbb{C}}

\newcommand{\RR}{\mathbb{R}}
\newcommand{\NN}{\mathbb{N}}

\newcommand{\PSU}{\operatorname{PSU}}

\newcommand{\Irr}{\operatorname{Irr}}

\title{A solution to Brauer's Problem 14}
\author{John Murray\footnote{Department of Mathematics and Statistics, National University of Ireland, Maynooth, County Kildare, Ireland, \href{mailto:John.Murray@mu.ie}{John.Murray@mu.ie}} \ and Benjamin Sambale\footnote{Institut für Algebra, Zahlentheorie und Diskrete Mathematik, Leibniz Universität Hannover, Welfengarten 1, 30167 Hannover, Germany,
\href{mailto:sambale@math.uni-hannover.de}{sambale@math.uni-hannover.de}}}
\date{\today}

\begin{document}
\frenchspacing
\maketitle
\begin{abstract}\noindent
It is well known that the number of real irreducible characters of a finite group $G$ coincides with the number of real conjugacy classes of $G$. Richard Brauer has asked if the number of irreducible characters with Frobenius--Schur indicator $1$ can also be expressed in group theoretical terms. We show that this can done by counting solutions of $g_1^2\ldots g_n^2=1$ with $g_1,\ldots,g_n\in G$. 
\end{abstract}

\textbf{Keywords:} Frobenius--Schur indicator; real characters; Brauer's Problem 14\\
\textbf{AMS classification:} 20C15, 20G20 

\section{Introduction}

Since Brauer's survey~\cite{BrauerLectures}, there has been some interest in expressing representation theoretical invariants of a finite group $G$ in terms of group theoretical descriptions. 
The most basic observation of this kind is probably that the number of irreducible characters of $G$ equals the number of conjugacy classes of $G$. 
In principle the whole character table of $G$ is implicitly determined via the class multiplication constants
\[|\{(x,y)\in C\times D:xy=z\}|,\]
where $C,D,E$ are conjugacy classes and $z\in E$ is fixed (this is the basis of the Dixon--Schneider algorithm; see \cite[Corollary~2.4.3]{LuxPahlings}). It is often desirable to have more direct relations. 
Brauer was particularly interested in the existence of $p$-\emph{defect zero} characters, i.\,e. $\chi\in\Irr(G)$ such that the degree $\chi(1)$ is divisible by the $p$-part $|G|_p$. Strunkov~\cite{Strunkov} (see also \cite[Theorem~4.12]{Navarro2}) showed that such characters exist if and only if there exists $g\in G$ such that $p$ does not divide
\[|\{(x,y)\in G^2:[x,y]=g\}|,\]
where $[x,y]=xyx^{-1}y^{-1}$ denotes the commutator of $x$ and $y$. 
Similar criteria were obtained by Barker~\cite{BarkerDefect}, Broué~\cite{BroueRobinson}, Qian~\cite{Qian} and Shi~\cite{Shi}. 
Robinson~\cite{RobinsonDef0} (see also \cite[Theorem~4.20]{Navarro}) has expressed the precise number of $p$-defect zero characters as the rank of a certain matrix defined in group theoretical terms. This has answered Brauer's Problem~19 of \cite{BrauerLectures}.
In a different paper, Robinson~\cite{RobinsonComm} has shown that the multiset of irreducible character degrees of $G$ is determined by the group theoretical sequence
\[|\{(a_1,b_1,\ldots,a_n,b_n)\in G^{2n}:[a_1,b_1]\ldots[a_n,b_n]=1\}|\qquad(n\in\NN).\]

Brauer's permutation lemma on the character table implies that the number of real irreducible characters coincides with the number of real conjugacy classes. Here, a conjugacy class $C$ of $G$ is called \emph{real} if $C=\{x^{-1}:x\in C\}$. Robinson has further shown that the number of real characters of a given degree is computable in group theoretical terms. Brauer's Problem~14 asks whether one can describe the number of characters which arise from real irreducible \emph{representations} (i.\,e. $\chi\in\Irr(G)$ with Frobenius--Schur indicator $\epsilon(\chi)=1$) group theoretically. Note that this number is not encoded in the character table. We answer this question positively as follows.

\begin{ThmA}
Let $G$ be a finite group with $k_r(G)=|\Irr_\RR(G)|$ real conjugacy classes. Then the multiset $\{\chi(1)\epsilon(\chi):\chi\in\Irr_\RR(G)\}$ is determined by the sequence
\[s(n):=|\{(g_1,\ldots,g_n)\in G^n:g_1^2\ldots g_n^2=1\}|\]
for $n=1,\ldots,k_r(G)+1$. In particular, $k_r(G)=\frac{s(2)}{|G|}$ and the number of irreducible characters of $G$ with Frobenius--Schur indicator $1$ can be described purely in group theoretical terms.
\end{ThmA}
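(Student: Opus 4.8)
The plan is to translate $s(n)$ into a sum over $\Irr(G)$ using the classical Frobenius--Schur count of square roots, to evaluate that sum by orthogonality, and then to reconstruct the multiset $\{\chi(1)\epsilon(\chi):\chi\in\Irr_\RR(G)\}$ from the resulting power sums. For $g\in G$ let $\sigma(g):=|\{x\in G:x^2=g\}|$; this is a class function and the Frobenius--Schur theorem gives $\sigma=\sum_{\chi\in\Irr(G)}\epsilon(\chi)\chi$. Sorting the tuples counted by $s(n)$ according to the value of $(g_1^2,\dots,g_n^2)$ yields
\[s(n)=\sum_{\substack{h_1,\dots,h_n\in G\\ h_1\cdots h_n=1}}\sigma(h_1)\cdots\sigma(h_n),\]
so $s(n)$ is the value at $1$ of the $n$-fold convolution $\sigma*\cdots*\sigma$ of class functions.

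Next I would evaluate this convolution. Using the identities $\chi*\psi=\delta_{\chi,\psi}\tfrac{|G|}{\chi(1)}\chi$ for $\chi,\psi\in\Irr(G)$ — equivalently, expanding $\sigma$ in the central primitive idempotents $e_\chi\in\Z(\CC G)$ and using $e_\chi e_\psi=\delta_{\chi,\psi}e_\chi$ — the mixed terms vanish and one gets
\[s(n)=|G|^{n-1}\sum_{\chi\in\Irr(G)}\frac{\epsilon(\chi)^n}{\chi(1)^{n-2}}=|G|^{n-1}\sum_{\chi\in\Irr_\RR(G)}d_\chi^{\,2-n},\qquad\text{where }d_\chi:=\epsilon(\chi)\chi(1)\in\ZZ\setminus\{0\};\]
here the non-real characters drop out since $\epsilon(\chi)=0$, and the last equality is the elementary identity $\epsilon(\chi)^n\chi(1)^{2-n}=d_\chi^{2-n}$ for real $\chi$, which holds because $\epsilon(\chi)^2=1$. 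Taking $n=2$ already gives $k_r(G)=s(2)/|G|$.

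It remains to recover the multiset $D:=\{d_\chi:\chi\in\Irr_\RR(G)\}$ — equivalently, the multiset $Y:=\{1/d_\chi:\chi\in\Irr_\RR(G)\}$ of $k_r:=k_r(G)$ nonzero rationals — from $s(1),\dots,s(k_r+1)$. By the displayed formula the numbers $s(n)/|G|^{n-1}$ for $n=1,\dots,k_r+1$ are precisely the power sums $p_m(Y):=\sum_\chi(1/d_\chi)^m$ for $m=-1,0,1,\dots,k_r-1$. Feeding $p_1(Y),\dots,p_{k_r-1}(Y)$ into Newton's identities produces the elementary symmetric functions $e_1(Y),\dots,e_{k_r-1}(Y)$. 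The remaining value is exploited via $p_{-1}(Y)=e_{k_r-1}(Y)/e_{k_r}(Y)$: since $p_{-1}(Y)=\sum_\chi d_\chi=s(1)=|\{g\in G:g^2=1\}|\ge1$ is nonzero, we obtain $e_{k_r-1}(Y)\ne0$ and hence $e_{k_r}(Y)=e_{k_r-1}(Y)/p_{-1}(Y)$. Thus every $e_i(Y)$ is determined, so the polynomial $\prod_{\chi\in\Irr_\RR(G)}(t-1/d_\chi)$ is determined, hence so is its root multiset $Y$, and therefore $D$. In particular $|\{\chi\in\Irr(G):\epsilon(\chi)=1\}|=|\{d\in D:d>0\}|$ is expressed through $s(1),\dots,s(k_r+1)$ and $|G|$, which is the desired group-theoretic description.

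I expect the last step to be the only real obstacle: a priori one would need the $k_r$ power sums $p_1(Y),\dots,p_{k_r}(Y)$ to determine a $k_r$-element multiset, but the range $n=1,\dots,k_r+1$ supplies only $p_1(Y),\dots,p_{k_r-1}(Y)$ together with the ``negative'' power sum $p_{-1}(Y)$. The resolution rests on the easy but essential remark that $p_{-1}(Y)=s(1)$ is a positive integer, which makes the relation $p_{-1}(Y)=e_{k_r-1}(Y)/e_{k_r}(Y)$ usable to pin down the one missing elementary symmetric function. Everything else — the Frobenius--Schur formula, the character-convolution relations, and Newton's identities — is standard.
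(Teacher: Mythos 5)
Your proposal is correct and is essentially the paper's own argument in slightly different packaging: your convolution of $\sigma=\sum_\chi\epsilon(\chi)\chi$ is the same computation as the paper's use of the central element $S=\frac{1}{|G|}\sum_{g\in G}g^2$ and the central characters $\omega_\chi$, leading to the identical identity $s(n)/|G|^{n-1}=\sum_{\chi\in\Irr_\RR(G)}(\epsilon(\chi)/\chi(1))^{n-2}$. Your reconstruction step — Newton's identities for $e_1,\dots,e_{k_r-1}$ plus the relation $p_{-1}=e_{k_r-1}/e_{k_r}$ with $p_{-1}=s(1)>0$ — is exactly the paper's Lemma together with the remark following it.
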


\section{Proofs}

We start with a combinatorial lemma.

\begin{Lem}\label{lem}
Let $a_1,\ldots,a_n\in\CC$. Then the multiset $\{a_1,\ldots,a_n\}$ is uniquely determined by the power sums $\sum_{i=1}^na_i^k$ for $k=0,1,\ldots,n$.
\end{Lem}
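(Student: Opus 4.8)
The plan is to reduce the statement to the fundamental theorem of algebra by passing from power sums to elementary symmetric functions via Newton's identities.

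First I would observe that the power sum for $k=0$ equals $\sum_{i=1}^n a_i^0=n$, so the integer $n$ is itself part of the given data. Write $p_k:=\sum_{i=1}^n a_i^k$ for $k=0,1,\ldots,n$ and let $e_0:=1,e_1,\ldots,e_n$ denote the elementary symmetric polynomials evaluated at $a_1,\ldots,a_n$. Next I would invoke Newton's identities in their triangular form: for each $1\le k\le n$,
\[k\,e_k=\sum_{j=1}^k(-1)^{j-1}e_{k-j}\,p_j.\]
Since $\CC$ has characteristic zero, I may divide by $k$ and solve recursively, obtaining $e_1=p_1$, then $e_2=\tfrac12(e_1p_1-p_2)$, and in general each $e_k$ as an explicit polynomial expression in $e_1,\ldots,e_{k-1}$ and $p_1,\ldots,p_k$. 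Consequently $e_1,\ldots,e_n$ are uniquely determined by $p_1,\ldots,p_n$; note that only power sums of index at most $n$ are consumed, matching the hypothesis exactly.

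Finally I would form the monic polynomial
\[f(X):=\prod_{i=1}^n(X-a_i)=X^n-e_1X^{n-1}+e_2X^{n-2}-\cdots+(-1)^ne_n\in\CC[X],\]
whose coefficients are now known from the previous step. By unique factorization in $\CC[X]$ (the fundamental theorem of algebra), the multiset of roots of $f$ — which is precisely $\{a_1,\ldots,a_n\}$ — is recovered from $f$, completing the argument.

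I do not expect any serious obstacle here: the proof is entirely standard. The only point deserving a moment's attention is the division by the integers $1,\ldots,n$ in Newton's identities, which is harmless over $\CC$ (though it would fail in small positive characteristic); everything else is routine bookkeeping, and one should merely confirm that power sums up to index $n$ genuinely suffice, as the recursion above shows.
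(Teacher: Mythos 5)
Your proof is correct and follows the same route as the paper: recover the elementary symmetric functions from the power sums via the Girard--Newton identities, then read off the multiset of roots of the resulting monic polynomial. Your added remarks (that $p_0$ encodes $n$, and that division by $1,\ldots,n$ is harmless over $\CC$) are accurate but not points of divergence.
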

\begin{proof}
Let $\sigma_0:=1,\sigma_1,\ldots,\sigma_n$ be the elementary symmetric functions in $n$ variables. By the Girard--Newton identities (see \cite[Theorem~8.7]{SambalePS}), the values $\sigma_k(a_1,\ldots,a_n)$ can be computed from the power sums. Hence, the polynomial \[(X-a_1)\ldots(X-a_n)=\sum_{k=0}^n(-1)^{n-k}\sigma_{n-k}(a_1,\ldots,a_n)X^k\] 
is uniquely determined and so are its roots.
\end{proof}

Remark:
Suppose that $a_1,\ldots,a_n$ are non-zero. Let $\rho_k(a_1,\ldots,a_n):=\sum_{i=1}^na_i^k$ for $k\in\ZZ$. With the notation of the proof above, we have
\[\sigma_n(a_1,\ldots,a_n)\rho_{-1}(a_1,\ldots,a_n)=a_1\ldots a_n\sum_{i=1}^na_i^{-1}=\sigma_{n-1}(a_1,\ldots,a_n).\]
Hence, if $\rho_{-1}(a_1,\ldots,a_n)$ is known and non-zero, then $\rho_n(a_1,\ldots,a_n)$ is not required to compute $a_1,\ldots,a_n$. This will be used in the following proof.

Recall that the Frobenius--Schur indicator of $\chi\in\Irr(G)$ is defined by 
\begin{equation}\label{defFS}
\epsilon(\chi):=\frac{1}{|G|}\sum_{g\in G}\chi(g^2)\in\{0,1,-1\}.
\end{equation}

\begin{proof}[Proof of Theorem~A]
Consider $S:=\frac{1}{|G|}\sum_{g\in G}g^2\in\Z(\CC G)$. For a fixed $n\in\NN$ we may write $S^n=\sum_{g\in G}\alpha_gg$ with $\alpha_g\in\ZZ$ for $g\in G$. Note that $s(n)=\alpha_1|G|^n$. Let $\chi\in\Irr(G)$. Recall that the function $\frac{\chi}{\chi(1)}$ extends to an algebra homomorphism $\omega_\chi\colon\Z(\CC G)\to\CC$. By \eqref{defFS}, $\omega_\chi(S)=\frac{\epsilon(\chi)}{\chi(1)}$. 
By the orthogonality relation,
\begin{align*}
\frac{s(n)}{|G|^{n-1}}&=\sum_{g\in G}\alpha_g\sum_{\chi\in\Irr(G)}\chi(g)\chi(1)=\sum_{\chi\in\Irr(G)}\chi(1)^2\sum_{g\in G}a_g\frac{\chi(g)}{\chi(1)}\\
&=\sum_{\chi\in\Irr(G)}\chi(1)^2\omega_\chi(S)^n=\sum_{\chi\in\Irr(G)}\frac{\epsilon(\chi)^n}{\chi(1)^{n-2}}=\sum_{\chi\in\Irr_\RR(G)}\Bigl(\frac{\epsilon(\chi)}{\chi(1)}\Bigr)^{n-2}
\end{align*}
for every $n\in\NN$. In particular, $s(2)=|G|k_r(G)$. We apply \autoref{lem} with the non-zero numbers $\{a_1,\ldots,a_{k_r(G)}\}=\{\frac{\epsilon(\chi)}{\chi(1)}:\chi\in\Irr_\RR(G)\}$. The power sums are given in terms of the $s(n)$. 
By the remark after \autoref{lem}, the multiset $\{\chi(1)\epsilon(\chi):\chi\in\Irr_\RR(G)\}$ is determined by $s(n)$ for $n=1,\ldots,k_r(G)+1$ since $s(1)>0$. 
\end{proof}

If all $\chi\in\Irr_\RR(G)$ have $\epsilon(\chi)=1$, then the proof shows that $\frac{s(n)}{|G|^n}$ is a non-increasing sequence. 
Hence, if there exists some (odd) $n$ such that $s(n)|G|< s(n+1)$, then some $\chi\in\Irr(G)$ has $\epsilon(\chi)=-1$. 
This criterion applies to the simple group $G=\PSU(3,3)$ with $n=5$ as one can check with GAP~\cite{GAPnew}. It does however not apply to the McLaughlin group $McL$, which is the only sporadic group with some $\epsilon(\chi)=-1$.
In general it is easy to show  that 
\[\lim_{n\to\infty}\frac{s(n)}{|G|^{n-1}}=|G/N|\] 
where $N:=\langle g^2:g\in G\rangle$ is the smallest normal subgroup of $G$ with elementary abelian $2$-quotient.

Since $k_r(G)$ and $s(2)$ are group theoretical invariants, one may ask if the equality $k_r(G)|G|=s(2)$ from Theorem~A can be proved without characters. To this end, let $C$ be a real conjugacy class and $x\in C$. Then there exist exactly $|\C_G(x)|$ elements $y\in G$ such that $y^{-1}xy=x^{-1}$. Now there are $|C||\C_G(x)|=|G|$ pairs $(x,y)\in G^2$ such that $y^{-1}xy=x^{-1}$ and $x\in C$. Consequently, it suffices to verify that the maps
\begin{align*}
\{(x,y)\in G^2:y^{-1}xy=x^{-1}\}&\longleftrightarrow\{(g,h)\in G^2:g^2h^2=1\},\\
(x,y)&\longmapsto(xy^{-1},y),\\
(gh,h)&\longmapsfrom(g,h)
\end{align*}
are mutually inverse bijections (we leave this to the reader). 

A straight-forward adaptation of Theorem~A shows that the higher Frobenius--Schur indicators 
\[\epsilon_k(\chi):=\frac{1}{|G|}\sum_{g\in G}\chi(g^k)\qquad(k\in\NN,\ \chi\in\Irr(G))\]
can be determined similarly in group theoretical terms.

\section*{Acknowledgment}
The paper was written while the first author visited the University of Hannover in November 2022. Both authors thank Burkhard Külshammer for some interesting discussion on the subject. Also Rod Gow made some valuable suggestions to improve the paper.
The second author is supported by the German Research Foundation (\mbox{SA 2864/4-1}).


\begin{thebibliography}{10}

\bibitem{BarkerDefect}
L. Barker, \textit{The number of blocks with a given defect group}, Mathematika
  \textbf{44} (1997), 368--373.

\bibitem{BrauerLectures}
R. Brauer, \textit{Representations of finite groups}, in: Lectures on {M}odern
  {M}athematics, {V}ol. {I}, 133--175, Wiley, New York, 1963.

\bibitem{BroueRobinson}
M. Brou\'{e}, \textit{On a theorem of {G}. {R}obinson}, J. London Math. Soc.
  (2) \textbf{29} (1984), 425--434.

\bibitem{GAPnew}
The GAP~Group, \textit{GAP -- Groups, Algorithms, and Programming, Version
  4.12.0}; 2022, (\url{http://www.gap-system.org}).

\bibitem{LuxPahlings}
K. Lux and H. Pahlings, \textit{Representations of groups}, Cambridge Studies
  in Advanced Mathematics, Vol. 124, Cambridge University Press, Cambridge,
  2010.

\bibitem{Navarro}
G. Navarro, \textit{Characters and blocks of finite groups}, London
  Mathematical Society Lecture Note Series, Vol. 250, Cambridge University
  Press, Cambridge, 1998.

\bibitem{Navarro2}
G. Navarro, \textit{Character theory and the {M}c{K}ay conjecture}, Cambridge
  Studies in Advanced Mathematics, Vol. 175, Cambridge University Press,
  Cambridge, 2018.

\bibitem{Qian}
F. Qian, \textit{The existence of {$p$}-blocks of a finite group}, Algebra
  Colloq. \textbf{13} (2006), 705--710.

\bibitem{RobinsonDef0}
G.~R. Robinson, \textit{The number of blocks with a given defect group}, J.
  Algebra \textbf{84} (1983), 493--502.

\bibitem{RobinsonComm}
G.~R. Robinson, \textit{Characters and the commutator map}, J. Algebra
  \textbf{321} (2009), 3521--3526.

\bibitem{SambalePS}
B. Sambale, \textit{An invitation to formal power series}, to appear in
  Jahresber. Dtsch. Math.-Ver.,
  \href{https://doi.org/10.1365/s13291-022-00256-6}{DOI:10.1365/s13291-022-00256-6}.

\bibitem{Shi}
S. Shi, \textit{The existence of {$p$}-blocks of a finite group}, Comm. Algebra
  \textbf{29} (2001), 5233--5238.

\bibitem{Strunkov}
S.~P. Strunkov, \textit{Blocks of defect zero in finite groups}, Izv. Akad.
  Nauk SSSR Ser. Mat. \textbf{53} (1989), 657--663, 672.

\end{thebibliography}
\end{document}